\newtheorem{theorem}{Theorem}[section]
\newtheorem{lemma}[theorem]{Lemma}
\newtheorem{definition}[theorem]{Definition}
\newtheorem{proposition}[theorem]{Proposition}
\newtheorem{example}[theorem]{Example}
\newtheorem{remark}[theorem]{Remark}
\def\<{\langle}
\def\>{\rangle}
\def\c{\cdot}
\date{}
\begin{document}
\renewcommand{\baselinestretch}{1.2}
\renewcommand{\arraystretch}{1.0}
\title{\bf On equivariant Lie-Yamaguti algebras and related structures}
\author{{\bf Shuangjian Guo$^{1}$, Bibhash Mondal$^{2}$,     Ripan Saha$^{3}$\footnote
        { Corresponding author (Ripan Saha),  Email: ripanjumaths@gmail.com}}\\
{\small 1. School of Mathematics and Statistics, Guizhou University of Finance and Economics} \\
{\small  Guiyang  550025, P. R. of China}\\
  {\small 2. Department of Mathematics, Behala College}\\
  {\small Behala 700060, Kolkata, India}\\
 {\small 3. Department of Mathematics, Raiganj University} \\
{\small  Raiganj 733134, West Bengal, India}}
 \maketitle
\begin{center}
\begin{minipage}{13.cm}

{\bf \begin{center} ABSTRACT \end{center}}
In this paper, we first discuss cohomology and a one-parameter formal deformation theory of Lie-Yamaguti algebras. Next, we study finite group actions on Lie-Yamaguti algebras and introduce equivariant cohomology for Lie-Yamaguti algebras equipped with group actions. Finally, we study an equivariant one-parameter formal deformation theory and show that our equivariant cohomology is the suitable deformation cohomology.

 \smallskip

{\bf Key words}: Group action; Lie-Yamaguti algebra; equivariant cohomology; formal deformation; rigidity.
 \smallskip

 {\bf 2020 MSC:} 17A30,  17B56, 17D99.
 \end{minipage}
 \end{center}
 \normalsize\vskip0.5cm

\section{Introduction}
\def\theequation{\arabic{section}. \arabic{equation}}
\setcounter{equation} {0}

Lie triple systems arose initially in Cartan's study of Riemannian geometry.
Jacobson \cite{Jacobson} first introduced Lie triple systems and Jordan triple systems in connection with problems from Jordan theory
and quantum mechanics, viewing Lie triple systems as subspaces of Lie algebras that are closed relative to the ternary product.
Lie-Yamaguti algebras were introduced by Yamaguti in \cite{Yamaguti1958} to give an algebraic interpretation of the characteristic properties of the torsion and curvature of homogeneous spaces with canonical connection studied in \cite{Nomizu}. He called them generalized Lie triple systems
at first, which were later called ``Lie triple algebras". Recently, they were renamed as ``Lie-Yamaguti algebras". The notion of Lie-Yamaguti algebra is a generalization of both Lie algebra and the Lie triple system. A Lie-Yamaguti algebra is a vector space together with a binary and ternary operation satisfying some compatibility conditions. In \cite{Zhang2015}, the authors studied $(2,3)$-cohomology groups and related deformation theory of Lie-Yamaguti algebras.  

In this paper, we first introduce cohomology groups of all orders for Lie-Yamaguti algebra, and using this cohomology we study a one-parameter formal deformation theory. Our main aim of the present paper is to study Lie-Yamaguti algebras equipped with finite group actions. We define group actions on Lie-Yamaguti algebras following Bredon's \cite{bredon67} idea for the topological $G$-spaces. We provide some examples of equivariant Lie-Yamaguti algebras. Next, we define equivariant cohomology for Lie-Yamaguti algebras. We also develop an equivariant formal deformation theory for Lie-Yamaguti algebras equipped with group actions and show that this equivariant deformation is controlled by our equivariant cohomology. We also obtain some classical deformation-related results following Gerstenhaber \cite{G1, G2, G3, G4, G5} deformation theory for associative algebras.
 \medskip

\section{Preliminaries}
\def\theequation{\arabic{section}.\arabic{equation}}
\setcounter{equation} {0}

Throughout this paper, we work on an algebraically closed field $\mathbb{K}$ of characteristic different from 2 and 3. We  recall some basic definitions regarding  Lie-Yamaguti algebras from \cite{Lin15} and \cite{Yamaguti1958, Yamaguti1969}.

\begin{definition}
A Lie-Yamaguti  algebra  is a triple $(L, [\c, \c], \{\c, \c, \c\})$ in which $L$ is a $\mathbb{K}$-vector space, $[\c, \c]$ a binary operation and $\{\c, \c, \c\}$
a ternary operation on $L$ such that
\begin{eqnarray*}
&&(LY1)~~ [x, y]=-[y,  x],\\
&&(LY2) ~~\{x,y,z\}=-\{y,x,z\},\\
&&(LY3) ~~\circlearrowleft_{(x,y,z)}([[x, y], z]+\{x, y, z\})=0,\\
&&(LY4) ~~\circlearrowleft_{(x,y,z)}(\{[x, y], z, u\})=0,\\
&&(LY5) ~~\{x, y, [u, v]\}=[\{x, y, u\}, v]+[u,\{x, y, v\}],\\
&&(LY6) ~~ \{x, y, \{u, v, w\}\}=\{\{x, y, u\}, v, w\} +\{u,  \{x, y, v\}, w\} +\{u,  v, \{x, y, w\}\},
\end{eqnarray*}
for all $ x, y, z, u, v, w\in L$ and where $\circlearrowleft_{(x,y,z)}$ denotes the sum over cyclic permutation of $x, y, z$.
\end{definition}

A homomorphism between two Lie-Yamaguti  algebras  $(L, [\c, \c], \{\c, \c, \c\})$ and $(L', [\c, \c]', \{\c, \c, \c\}')$ is a linear map $\varphi: L\rightarrow L'$ satisfying
\begin{eqnarray*}
\varphi([x, y])=[\varphi(x), \varphi(y)]', ~~~~~~~~~\varphi(\{x, y, z\})=\{\varphi(x), \varphi(y),\varphi(z)\}'.
\end{eqnarray*}

\begin{definition} Let $(L, [\c, \c], \{\c, \c, \c\})$  be a Lie-Yamaguti  algebra and $V$ be a vector space. A representation of
$L$ on $V$ consists of a linear map $\rho: L\rightarrow$ End($V$) and bilinear maps $D, \theta: L\times L\rightarrow$ End($V$)
such that the following conditions are satisfied:
\begin{eqnarray*}
 &&(R1) ~D(x, y)-\theta(y, x)+\theta(x, y)+\rho([x, y])-\rho(x)\rho(y)+\rho(y)\rho(x)=0,\\
 &&(R2) ~D([x, y], z)+D([y, z],x)+D([z, x], y)=0, \\
 &&(R3)  ~\theta([x, y], z)=\theta(x, z)\rho(y)-\theta(y, z)\rho(x), \\
 &&(R4) ~D(x, y)\rho(z)=\rho(z)D(x, y)+\rho(\{x, y, z\}), \\
 &&(R5)~\theta(x, [y, z])=\rho(y)\theta(x, z)-\rho(z)\theta(x, y),\\
 &&(R6) ~D(x, y)\theta(u, v)=~\theta(u, v)D(x, y)+\theta(\{x, y, u\}, v)+\theta(u, \{x, y, v\}),\\
 &&(R7)  ~\theta(x, \{y, z, u\})=\theta(z, u) \theta(x, y)-\theta(y, u) \theta(x, z)+D(y, z)\theta(x, u),
\end{eqnarray*}
for any $x, y, z, u, v\in L$. In this case, we also call $V$ to be an $L$-module.
\end{definition}

For example, given a Lie-Yamaguti  algebra  $(L, [\c, \c], \{\c, \c, \c\})$, there is a natural adjoint representation on itself. The corresponding
representation maps $\rho, D$ and $\theta$ are given by
\begin{eqnarray*}
\rho(x)(y):= [x, y],~~ D(x, y)z:= \{x, y, z\},~~ \theta(x, y)z:= \{z, x, y\}.
\end{eqnarray*}

Let $V$ be a representation of a Lie-Yamaguti  algebra $(L, [\c, \c], \{\c, \c, \c\})$. Let us define the cohomology groups of $L$ with coefficients
in $V$. Let $f: L\times L \times \c\c\c\times L$ be an $n$-linear map of $L$ into $V$ such that the following conditions are
satisfied:
\begin{eqnarray*}
 f(x_1, \ldots, x_{2i-1}, x_{2i}, \ldots, x_{n})=0, ~~~~\mbox{if}~~~ x_{2i-1}=x_{2i}.
\end{eqnarray*}
The vector space spanned by such linear maps is called an $n$-cochain of $L$, which is denoted by
$C^{n}(L, V)$ for $n\geq 1$.

\begin{definition}
For any $(f, g)\in  C^{2n}(L, V) \times C^{2n+1}(L, V )$ the coboundary operator  $\delta: (f, g)\rightarrow (\delta_I f, \delta_{II} g)$ is a mapping from $C^{2n}(L, V) \times C^{2n+1}(L, V )$ into $C^{2n+2}(L, V) \times C^{2n+3}(L, V )$ defined as follows:
\begin{eqnarray*}
&& (\delta_If)(x_1,x_2,\ldots,x_{2n+2})\\
&=&\rho(x_{2n+1})g(x_1,x_2,\ldots,x_{2n},x_{2n+2})-\rho(x_{2n+2})g(x_1,x_2,\ldots,x_{2n},x_{2n+1})\\
  &&-g(x_1,x_2,\ldots,x_{2n},[x_{2n+1},x_{2n+2}])\\
  &&+\sum_{k=1}^{n}(-1)^{n+k+1}D(x_{2k-1},x_{2k})f(x_{1},\ldots,\widehat{x}_{2k-1},\widehat{x}_{2k},\ldots,x_{2n+2})\\
   &&+\sum_{k=1}^{n}\sum_{j=2k+1}^{2n+2}(-1)^{n+k}f(x_{1},\ldots,\widehat{x}_{2k-1},\widehat{x}_{2k},\ldots,\{x_{2k-1},x_{2k},x_j\},\ldots,x_{2n+2}),\\
&&(\delta_{II}g)(x_1,x_2,\ldots,x_{2n+3})\\
&=&\theta(x_{2n+2},x_{2n+3})g(x_1,\ldots,x_{2n+1})-\theta(x_{2n+1},x_{2n+3})g(x_1,\ldots,x_{2n},x_{2n+1})\\
  &&+\sum_{k=1}^{n+1}(-1)^{n+k+1}D(x_{2k-1},x_{2k})g(x_{1},\ldots,\widehat{x}_{2k-1},\widehat{x}_{2k},\ldots,x_{2n+3})\\
   &&+\sum_{k=1}^{n+1}\sum_{j=2k+1}^{2n+3}(-1)^{n+k}g(x_{1},\ldots,\widehat{x}_{2k-1},\widehat{x}_{2k},\ldots,\{x_{2k-1},x_{2k},x_j\},\ldots,x_{2n+3}).
\end{eqnarray*}
\end{definition}

\begin{proposition}

The coboundary operator defined above satisfies $\delta \circ \delta=0$, that is $\delta_I\circ \delta_I=0$ and $\delta_{II}\circ \delta_{II}=0$.

\end{proposition}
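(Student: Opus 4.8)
The plan is to verify the two relations $\delta_I\circ\delta_I=0$ and $\delta_{II}\circ\delta_{II}=0$ by a direct expansion of the double composite, organizing the resulting terms into a few families so that each family cancels by one of the representation axioms (R1)--(R7) or one of the Lie-Yamaguti axioms (LY1)--(LY6). (Since $\delta_{II}g$ depends only on the odd cochain $g$, ``$\delta_{II}\circ\delta_{II}$'' literally means $\delta_{II}(\delta_{II}g)$; ``$\delta_I\circ\delta_I$'' means the first component $\delta_I(\delta_If,\delta_{II}g)$ of $\delta(\delta(f,g))$.)

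I would treat $\delta_{II}\circ\delta_{II}=0$ first, as it is the cleaner case: $\delta_{II}$ involves only the operators $D$ and $\theta$. Substituting the formula for $\delta_{II}g$ into each of the four structural blocks of the outer $\delta_{II}$ --- the two leading $\theta$-terms, the block $\sum_kD(x_{2k-1},x_{2k})g(\ldots)$, and the triple-product insertion block $\sum_k\sum_jg(\ldots,\{x_{2k-1},x_{2k},x_j\},\ldots)$ --- produces a long but finite list of terms, which I would sort by their leading operator. The terms carrying two $\theta$'s, or a $\theta$ one of whose arguments is itself a triple product, cancel among themselves by (R7), with the corrections coming from (R3) and (R5); the terms $D(x_{2i-1},x_{2i})\theta(\cdot,\cdot)g$ cancel the insertion terms in which a triple product sits inside a $\theta$-slot, by (R6); the terms carrying two $D$'s, together with the insertion terms in which a triple product sits inside a $D$-slot, cancel by (R2) and (R6), using the antisymmetry $D(x,y)=-D(y,x)$ that (R1) provides; and finally the remaining ``pure'' terms, in which two triple products are nested or adjacent inside a single argument of $g$, cancel in triples by the ternary Jacobi identity (LY6), with (LY4) handling the case where both fall inside one argument slot and (LY2) used to line up terms that differ by a transposition. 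The signs $(-1)^{n+k}$ and the degree shift $2n+1\mapsto2n+3$ are exactly calibrated so that every family sums to zero.

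The relation $\delta_I\circ\delta_I=0$ follows the same scheme but with more terms, because the leading part of $\delta_I$, namely $\rho(x_{2n+1})g(\ldots)-\rho(x_{2n+2})g(\ldots)-g(\ldots,[x_{2n+1},x_{2n+2}])$, couples the even and odd cochains. After expanding $\delta_I(\delta_If,\delta_{II}g)$ I would split the output into the purely ``inner'' part built from $f$ --- with the same term types as before, cancelling by (R2), (R6), (LY2), (LY4), (LY6) --- and the ``mixed'' part built from $\rho$, $\theta$, $D$ acting on $g$. In the mixed part, the compositions $\rho\,\theta\,g$ produced by applying $\rho(x)$ to $\delta_{II}g$ cancel the terms $\theta(\cdot,[\cdot,\cdot])\,g$ produced by evaluating $\delta_{II}g$ on a binary bracket, by (R5); each composition $D\,\rho\,g$ rewrites, via (R4), as $\rho\,D\,g+\rho(\{\cdot,\cdot,\cdot\})\,g$, the first summand cancelling a leftover $\rho\,D\,g$ and the second cancelling an insertion term in which the inserted triple product occupies a $\rho$-slot; the terms $\theta([\cdot,\cdot],\cdot)\,g$ are resolved by (R3); and wherever an inserted triple product meets the binary bracket $[x_{2n+1},x_{2n+2}]$ or a composite $[[\cdot,\cdot],\cdot]$, the identities (LY5), (LY3) and the antisymmetry (LY1) finish the job. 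Across the whole verification, all of (R1)--(R7) and (LY1)--(LY6) are used.

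The real difficulty is combinatorial rather than conceptual. When the outer operator is applied one must, for every inner summation index $j$, determine whether $x_j$ precedes, lies between, or follows the pair $(x_{2k-1},x_{2k})$ deleted by the inner operator, and the ``diagonal'' contributions where $j\in\{2k-1,2k\}$ must be peeled off and matched separately; interchanging the two summations and relabelling is what makes the paired terms visibly coincide, while keeping the signs $(-1)^{n+k}$ consistent through the degree shift is where errors are most likely to creep in. Once the pairing is laid out, each cancellation is a one-line appeal to an axiom. As this is the module-coefficient analogue of Yamaguti's original computation \cite{Yamaguti1969}, one could simply cite that source; I would instead carry out the binary-versus-ternary bookkeeping in full, since the three independent operators $\rho$, $D$, $\theta$ produce enough new terms to make the verification worth recording.
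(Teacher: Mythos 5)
Your route is genuinely different from the paper's: the paper disposes of this proposition in one line by citing the Hom-Lie-Yamaguti computation of Ma--Chen--Lin (\cite{MCL}, the case where the twisting map is the identity), which is essentially the fallback you mention (citing Yamaguti's original module-coefficient computation \cite{Yamaguti1969} would serve equally well), whereas you propose a self-contained direct expansion sorted into cancelling families. What the citation buys is brevity and safety; what your verification would buy is a check of the specific sign conventions and index ranges in this paper's formulas, which is not worthless here since the displayed coboundary contains visible typos (e.g.\ the second $\theta$-term of $\delta_{II}g$ repeats $x_{2n+1}$ and omits $x_{2n+2}$). Two caveats on your sketch before it could stand as the actual proof. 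First, it remains an outline: the cancellations are asserted family-by-family, and the combinatorial pairing you yourself identify as the real difficulty is never carried out, so as written you have delivered no more detail than the paper does. Second, some of your axiom assignments are off and would need repair in the execution: in $\delta_{II}\circ\delta_{II}$ neither $\rho$ nor the binary bracket ever appears, so (R3), (R5) and (R2) play no role there; the terms with two $D$'s against the insertions landing in a $D$-slot cancel not by ``(R2) and (R6)'' but by the identity $D(x,y)D(u,v)-D(u,v)D(x,y)=D(\{x,y,u\},v)+D(u,\{x,y,v\})$, which is not among (R1)--(R7) and must first be derived (it follows from (R1), (R4), (R6) together with (LY5) and (LY1)). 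With that identity supplied and the bookkeeping actually done, your plan is the standard and correct one.
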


\begin{proof}
The proof is similar to the proof in \cite{MCL}.
\end{proof}

Let $Z^{2n}(L, V) \times Z^{2n+1}(L, V )$ be the subspace of $C^{2n}(L, V) \times C^{2n+1}(L, V )$ spanned by $(f, g)$
such that $\delta(f, g) = 0$ which is called the space of cocycles and $B^{2n}(L, V ) \times B^{2n+1}(L, V ) =
\delta(C^{2n-2}(L, V ) \times C^{2n-1}(L, V ))$ which is called the space of coboundaries.

\begin{definition}
For the case $n\geq 2$, the $(2n, 2n + 1)$-cohomology group of a  Lie-Yamaguti  algebra $(L, [\c, \c], \{\c, \c, \c\})$ with
coefficients in $V$ is defined to be the quotient space:
\begin{eqnarray*}
 H^{2n}(L, V)\times H^{2n+1}(L, V):=(Z^{2n}(L, V) \times Z^{2n+1}(L, V ))/(B^{2n}(L, V) \times B^{2n+1}(L, V )).
\end{eqnarray*}
\end{definition}
In conclusion, we obtain a cochain complex whose cohomology group is called cohomology
group of a  Lie-Yamaguti  algebra $(L, [\c, \c], \{\c, \c, \c\})$ with coefficients in $V$.

 Let $(L, [\c, \c], \{\c, \c, \c\})$  be a Lie-Yamaguti algebra over $\mathbb{K}$ and $\mathbb{K}[[t]]$  the power series ring in one variable $t$ with coefficients in $\mathbb{K}$.
Assume that $L[[t]]$ is the set of formal series whose coefficients are elements of the vector space $L$.

\begin{definition}
Let $(L, [\c, \c], \{\c, \c, \c\})$  be a Lie-Yamaguti algebra.
 A 1-parameter formal deformation of $L$  is a pair of formal power series
 $(f_{t}, g_t)$ of the form
\begin{eqnarray*}
f_{t}=[\c, \c]+\sum_{i\geq 1}f_{i}t^i,~~~~g_{t}=\{\c, \c, \c\}+\sum_{i\geq 1}g_{i}t^i,
\end{eqnarray*}
where each $f_{i}$ is a $\mathbb{K}$-bilinear map $f_{i}: L\times L\rightarrow L$  (extended to be $\mathbb{K}[[t]]$-bilinear) and each $g_{i}$ is a $\mathbb{K}$-trilinear map $g_{i}: L\times L \times L\rightarrow L$  (extended to be $\mathbb{K}[[t]]$-trilinear) such that  $(L[[t]], f_t, g_t)$ is a  Lie-Yamaguti algebra  over $\mathbb{K}[[t]]$.        Set $f_{0}=[\c, \c]$ and $g_0=\{\c, \c, \c\}$, then $f_t$ and $g_t$ can be written as  $f_{t}=\sum_{i\geq 0}f_{i}t^i, g_{t}=\sum_{i\geq 0}g_{i}t^i$, respectively.
\end{definition}
Since $(L[[t]], f_t, g_t)$ is a  Lie-Yamaguti algebra. Then it  satisfies the  following axioms:
\begin{eqnarray}
&&~f_t(x, x)=0\\
&&~g_t(x,x,y)=0\\
&& ~\circlearrowleft_{(x,y,z)}(f_t(f_t(x, y), z)+g_t(x, y, z))=0\\
&& ~\circlearrowleft_{(x,y,z)}g_t(f_t(x, y), z, u)=0\\
&&~g_t(x, y, f_t(u, v))=f_t(g_t(x, y, u), v)+f_t(u,g_t(x, y, v))~~~~~~~\\
&&~ g_t(x, y, g_t(u, v, w))=g_t(g_t(x, y, u), v, w)+g_t(u,  g_t(x, y, v), w) \nonumber\\
&&+g_t(u,  v, g_t(x, y, w)),~~~~~~~~
\end{eqnarray}
for all $ x, y, z, u, v, w\in L$.

\begin{remark}
Equations (2.1)-(2.6) are equivalent to $(n=0,1,2,\cdots)$
\begin{eqnarray}
&&~f_n(x, x)=0\\
&&~g_n(x,x,y)=0\\
&& ~\circlearrowleft_{(x,y,z)}(\sum_{i+j=n}f_i(f_j(x, y), z)+g_n(x, y, z))=0\\
&& ~\circlearrowleft_{(x,y,z)}\sum_{i+j=n}g_i(f_j(x, y), z, u)=0\\
&&~\sum_{i+j=n}g_i(x, y, f_j(u, v))=\sum_{i+j=n}f_i(g_j(x, y, u), v)+f_i(u,g_j(x, y, v))\\
&&~\sum_{i+j=n} g_i(x, y, g_j(u, v, w))=\sum_{i+j=n}g_i(g_j(x, y, u), v, w)\nonumber\\
&&+\sum_{i+j=n} g_i(u,  g_j(x, y, v), w)+ \sum_{i+j=n}g_i(u,  v, g_j(x, y, w))
\end{eqnarray}
for all $ x, y, z, u, v, w\in L$,
respectively. These equations are called the deformation equations of a  Lie-Yamaguti algebra. Eqs.(2.7)-(2.8) imply $(f, g)\in C^2(L, L)\times C^3(L, L)$.
\end{remark}

Let $n=1$ in Eqs. (2.9)-(2.12). Then
\begin{eqnarray*}
&& ~\circlearrowleft_{(x,y,z)}([f_1(x, y), z]+f_1([x, y], z)+g_1(x, y, z))=0;\\
&& ~\circlearrowleft_{(x,y,z)}(\{f_1(x, y), z, u\}+g_1([x, y], z, u))=0;\\
&&~\{x, y, f_1(u, v)\}+g_1(x, y, [u, v])-[g_1(x, y, u), v]\nonumber\\
&&-f_1(\{x,y,u\}, v)-(z, g_1(x, y, z))-f_1(z,\{x, y, u\})=0;\\
&&and\\
&& \{x, y, g_1(u, v, w)\}+g_1(x, y, \{u, v, w\})-\{g_1(x, y, u), v, w\}\nonumber\\
&& -g_1(\{x, y, u\}, v, w)-\{u,  g_1(x, y, v), w\}-g_1(u,  \{x, y, v\}, w)\nonumber\\
&&-\{u,  v, g_1(x, y, w) \}-g_1(u,  v, \{x, y, w\})-\{u,  v, g_1(x, y, w)\}=0.
\end{eqnarray*}

 which imply $(\delta^{2}_{I}, \delta^{2}_{II})(f_1,g_1)=(0, 0)$, i.e.,
 \begin{eqnarray*}
 (f_1,g_1)\in Z^2(L, L)\times Z^3(L, L).
 \end{eqnarray*}
The pair $(f_{1}, g_1)$ is called the infinitesimal  deformation of $(f_{t}, g_t)$.
\medskip

\begin{definition}
Let $(L, [\c, \c], \{\c, \c, \c\})$  be a Lie-Yamaguti algebra.
 Two 1-parameter formal deformations $(f_{t}, g_t)$ and $(f'_{t}, g'_t)$  of $L$ are said to be equivalent,  denoted by $(f_{t}, g_t)\sim (f'_{t}, g'_t)$,
 if there exists a formal isomorphism of  $\mathbb{K}[[t]]$-modules
  \begin{eqnarray*}
\phi_{t}(x)=\sum_{i\geq 0}\phi_{i}(x)t^{i}:(L[[t]],f_{t},g_t)\rightarrow (L[[t]],f'_{t},g'_t),
\end{eqnarray*}
where $\phi_{i}:L \rightarrow L$ is a $\mathbb{K}$-linear map (extended to be $\mathbb{K}[[t]]$-linear) such that
 \begin{eqnarray*}
&& \phi_{0}=id_L, \phi_{t}\circ f_t(x, y)=f'_t(\phi_{t}(x),\phi_{t}(y)),\\
&& \phi_{t}\circ g_t(x, y, z)=g'_t(\phi_{t}(x),\phi_{t}(y), \phi_{t}(z)).
\end{eqnarray*}
\end{definition}
In particular, if $(f_1, g_1)=(f_2, g_2)=\cdots =(0, 0),$ then  $(f_t, g_t)=(f_0, g_0)$ is called the null deformation.
If  $(f_t, g_t)\sim (f_0, g_0)$, then $(f_t, g_t)$ is called the trivial deformation.
If every 1-parameter formal deformation $(f_t, g_t)$ is trivial, then $L$ is called an analytically rigid Lie-Yamaguti algebra.

\begin{theorem}
Let $(f_{t}, g_t)$ and $(f'_{t}, g'_t)$ be
 two equivalent 1-parameter formal deformations  of $L$.
Then the infinitesimal deformations $(f_1, g_1)$ and $(f'_1, g'_1)$
belong to the same cohomology class in  $H^{2}(L,L)\times H^{3}(L,L).$
\end{theorem}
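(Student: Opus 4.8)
The plan is to compare the coefficients of $t^{1}$ in the two intertwining identities that define the equivalence and to recognise the resulting expressions as coboundaries. Write $\phi_t=\mathrm{id}_L+\sum_{i\ge 1}\phi_i t^{i}$ with $\phi_i\in C^{1}(L,L)$, and substitute this together with $f_t=\sum_{i\ge 0}f_i t^{i}$, $g_t=\sum_{i\ge 0}g_i t^{i}$ and their primed analogues (recalling $f_0=f_0'=[\c,\c]$ and $g_0=g_0'=\{\c,\c,\c\}$) into
\[
\phi_t\bigl(f_t(x,y)\bigr)=f_t'\bigl(\phi_t(x),\phi_t(y)\bigr),\qquad
\phi_t\bigl(g_t(x,y,z)\bigr)=g_t'\bigl(\phi_t(x),\phi_t(y),\phi_t(z)\bigr).
\]
The $t^{0}$-coefficients hold automatically because $\phi_0=\mathrm{id}_L$.

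Next I would read off the coefficients of $t^{1}$. The binary identity yields
\[
\phi_1([x,y])+f_1(x,y)=f_1'(x,y)+[\phi_1(x),y]+[x,\phi_1(y)],
\]
and the ternary identity yields
\[
\phi_1(\{x,y,z\})+g_1(x,y,z)=g_1'(x,y,z)+\{\phi_1(x),y,z\}+\{x,\phi_1(y),z\}+\{x,y,\phi_1(z)\}.
\]
Rearranging, $f_1-f_1'$ and $g_1-g_1'$ are expressed purely through $\phi_1$ and the undeformed operations of $L$.

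The key step is then to identify these two expressions as the components of a coboundary. I would apply the coboundary operator of the Definition in its lowest ($n=0$) case, with the $C^{0}$-slot taken to be zero, to get a map $C^{1}(L,L)\to C^{2}(L,L)\times C^{3}(L,L)$, $\phi\mapsto(\delta_I\phi,\delta_{II}\phi)$, where $\rho,D,\theta$ are the adjoint representation maps $\rho(x)y=[x,y]$, $D(x,y)z=\{x,y,z\}$, $\theta(x,y)z=\{z,x,y\}$. The raw formulas read $(\delta_I\phi)(x,y)=\rho(x)\phi(y)-\rho(y)\phi(x)-\phi([x,y])$ and $(\delta_{II}\phi)(x,y,z)=\theta(y,z)\phi(x)-\theta(x,z)\phi(y)+D(x,y)\phi(z)-\phi(\{x,y,z\})$; using $(LY1)$ for the first and $(LY2)$ (to rewrite $-\theta(x,z)\phi(y)=-\{\phi(y),x,z\}=\{x,\phi(y),z\}$) for the second, these become
\[
(\delta_I\phi)(x,y)=[x,\phi(y)]+[\phi(x),y]-\phi([x,y]),
\]
\[
(\delta_{II}\phi)(x,y,z)=\{\phi(x),y,z\}+\{x,\phi(y),z\}+\{x,y,\phi(z)\}-\phi(\{x,y,z\}),
\]
which coincide exactly with $f_1-f_1'$ and $g_1-g_1'$. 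Hence $(f_1,g_1)-(f_1',g_1')=\delta(\phi_1)\in B^{2}(L,L)\times B^{3}(L,L)$, and since $(f_1,g_1)$ and $(f_1',g_1')$ are both cocycles (established in the discussion preceding the theorem) they represent the same class in $H^{2}(L,L)\times H^{3}(L,L)$.

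I do not anticipate a genuine obstacle: this is the Lie--Yamaguti counterpart of Gerstenhaber's classical computation, and the only delicate points are the sign bookkeeping in extracting the $t^{1}$-terms and, on the ternary side, the antisymmetrisation via $(LY2)$ needed to match the stated shape of $\delta_{II}$. The one point I would make explicit is the convention for the $n=0$ coboundary (equivalently, for $C^{0}(L,L)$), so that $B^{2}(L,L)\times B^{3}(L,L)=\delta(C^{0}(L,L)\times C^{1}(L,L))$ is unambiguous and the map $\phi_1\mapsto(f_1-f_1',g_1-g_1')$ is literally the coboundary.
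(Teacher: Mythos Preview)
Your proposal is correct and follows essentially the same approach as the paper: expand the intertwining identities, compare the coefficients of $t^{1}$, and identify $(f_1-f_1',\,g_1-g_1')$ as the coboundary of $\phi_1$. In fact you are more careful than the paper, which simply writes $(f_1-f_1',g_1-g_1')=(\delta^1_I,\delta^1_{II})(\phi_1,\phi_1)$ without spelling out the $n=0$ coboundary formulas; your explicit computation of $\delta_I\phi$ and $\delta_{II}\phi$ via the adjoint representation and the use of $(LY1)$--$(LY2)$ to match signs, together with your remark about the $C^{0}$ convention, fill in details the paper leaves implicit.
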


{\bf Proof.}
By the assumption that $(f_1, g_1)$ and $(f'_1, g_1')$ are equivalent, there exists  a formal isomorphism $\phi_{t}(x)=\sum_{i\geq 0}\phi_{i}(x)t^{i}$
 of  $\mathbb{K}[[t]]$-modules satisfying
  \begin{eqnarray*}
\sum_{i\geq 0}\phi_i(\sum_{j\geq 0}f_j(x_1,x_2)t^{j})t^{i}
=\sum_{i\geq 0}f'_i(\sum_{k\geq 0}\phi_{k}(x_1)t^{k},\sum_{l\geq 0}\phi_{l}(x_2)t^{l})t^{i},\\
\sum_{i\geq 0}\phi_i(\sum_{j\geq 0}g_j(x_1,x_2, x_3)t^{j})t^{i}
=\sum_{i\geq 0}g'_i(\sum_{k\geq 0}\phi_{k}(x_1)t^{k},\sum_{l\geq 0}\phi_{l}(x_2)t^{l}, \sum_{m\geq 0}\phi_{m}(x_3)t^{m})t^{i},\\
\end{eqnarray*}
for any $x_1,x_2, x_3\in L.$
Comparing with the coefficients of $t^1$ for two sides of the above equation, we have
    \begin{eqnarray*}
    &&f_1(x_1,x_2)+\phi_{1}([x_1,x_2])=f'_1(x_1,x_2)+[\phi_{1}(x_1),x_2]+[x_1,\phi_{1}(x_2)],\\
&&g_1(x_1,x_2, x_3)+\phi_{1}(\{x_1,x_2, x_3\})\\
&=&g'_1(x_1,x_2, x_3)+\{\phi_{1}(x_1),x_2, x_3\}+\{x_1,\phi_{1}(x_2), x_3\}+\{x_1,x_2, \phi_{1}(x_3)\}.
\end{eqnarray*}
It follows that $(f_1-f'_1, g_1-g'_1)=(\delta^{1}_{I}, \delta^{1}_{II})(\phi_1, \phi_1)\in B^{2}(L,L)\times B^{3}(L,L),$  as desired.
Thus, $(f_1, g_1)$ and $(f'_1, g_1')$ are cohomologous .$\hfill \Box$

\begin{theorem}\label{thm 210}
Let $(L, [\c, \c], \{\c, \c, \c\})$  be a Lie-Yamaguti algebra with $H^{2}(L,L)\times H^{3}(L,L)=0$, then $L$ is analytically rigid.
\end{theorem}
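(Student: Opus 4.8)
The plan is to show that any one-parameter formal deformation $(f_t,g_t)$ of $L$ is equivalent to the null deformation by an inductive "killing of the lowest-order term" argument. Suppose $(f_t,g_t)\neq(f_0,g_0)$, and let $n\geq 1$ be the smallest index with $(f_n,g_n)\neq(0,0)$. The strategy is to show that after applying a suitable formal isomorphism $\phi_t = \mathrm{id}_L + \phi_n t^n$, the new equivalent deformation $(f'_t,g'_t)$ has $f'_i = f_i$ and $g'_i = g_i$ for $1\le i\le n-1$ (hence still $0$) and $(f'_n,g'_n)=(0,0)$; iterating (and passing to the limit in the $t$-adic topology) yields $(f_t,g_t)\sim(f_0,g_0)$.

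First I would record the analogue, at order $n$, of the $n=1$ computation already done in the excerpt: writing out the deformation equations (2.9)–(2.12) for the index $n$ and using that $f_i=g_i=0$ for $0<i<n$, all the "mixed" terms $\sum_{i+j=n}$ with $0<i,j<n$ vanish, so only the $i=0,j=n$ and $i=n,j=0$ pieces survive. These collapse exactly to the equations $(\delta^2_I f_n,\delta^2_{II} g_n)=(0,0)$, i.e. $(f_n,g_n)\in Z^{2}(L,L)\times Z^{3}(L,L)$. This is the same bookkeeping that produced the infinitesimal cocycle condition for $n=1$, just shifted; I would state it as a lemma ("the first non-vanishing term of a formal deformation is a $(2,3)$-cocycle"). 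Since $H^2(L,L)\times H^3(L,L)=0$ by hypothesis, there is $\phi_n\colon L\to L$ with $(f_n,g_n) = (\delta^1_I \phi_n,\delta^1_{II}\phi_n)$.

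Next I would define $\phi_t = \mathrm{id}_L + \phi_n t^n$, which is a formal isomorphism of $\mathbb{K}[[t]]$-modules (its inverse is $\mathrm{id}_L - \phi_n t^n + \phi_n^2 t^{2n} - \cdots$), and transport the deformation: $f'_t(x,y) := \phi_t(f_t(\phi_t^{-1}(x),\phi_t^{-1}(y)))$ and similarly $g'_t$. By construction $(f'_t,g'_t)\sim(f_t,g_t)$ and is again a deformation of $L$. Expanding in powers of $t$: since $\phi_n$ first contributes at order $t^n$, one gets $f'_i = f_i$ and $g'_i = g_i$ for $i<n$, while the coefficient of $t^n$ in $f'_t$ is $f_n + (\text{terms from }\phi_n\text{ acting on }f_0)$, which is precisely $f_n - \delta^1_I\phi_n$ — exactly the computation in the proof of the preceding theorem (Theorem on equivalent infinitesimals), read in reverse. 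Hence $f'_n = f_n - \delta^1_I\phi_n = 0$ and likewise $g'_n = g_n - \delta^1_{II}\phi_n = 0$. So $(f'_t,g'_t)$ has its lowest non-trivial term at an index $>n$.

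Finally I would set up the induction cleanly: having killed all terms up to order $n$, repeat with the new deformation to kill order $n+1$, etc.; composing the successive $\phi$'s gives a well-defined formal isomorphism (the composite stabilizes modulo each power of $t$), and in the limit one obtains an equivalence $(f_t,g_t)\sim(f_0,g_0)$, so $L$ is analytically rigid. The main obstacle — and the only place real content enters beyond routine expansion — is the lemma that the lowest-order term $(f_n,g_n)$ is always a $(2,3)$-cocycle; this requires carefully tracking signs through (2.9)–(2.12) to see the surviving terms assemble into $\delta^2$, and one must also keep checking that the skew-symmetry conditions (2.7)–(2.8), i.e. membership in $C^2(L,L)\times C^3(L,L)$, are preserved under the transport by $\phi_t$, which is immediate since $\phi_n$ is linear and $f_n,g_n$ already satisfy them.
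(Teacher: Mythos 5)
Your proposal is correct and follows essentially the same route as the paper: isolate the lowest-order nonvanishing term, use the deformation equations to see it is a $(2,3)$-cocycle, use $H^{2}(L,L)\times H^{3}(L,L)=0$ to write it as $(\delta^{1}_{I}h,\delta^{1}_{II}h)$, and kill it with the formal isomorphism $\mathrm{id}_L\pm h\,t^{n}$, then induct. The only differences are cosmetic (you use $\mathrm{id}_L+\phi_n t^n$ and transport in the opposite direction, versus the paper's $\mathrm{id}_L-h_r t^r$), and your remark about the $t$-adic convergence of the composed equivalences is if anything more careful than the paper's appeal to induction.
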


{\bf Proof.}
Let $(f_{t}, g_t)$ be a 1-parameter formal deformation  of $L$. Suppose $f_t=f_0+\sum_{i\geq r}f_it^i$, $g_t=g_0+\sum_{i\geq r}g_it^i$.
Set $n=r$ in Eqs.(2.7)-(2.8), It follows that
\begin{eqnarray*}
(f_r,g_r)\in Z^{2}(L,L)\times Z^{3}(L,L)=B^{2}(L,L)\times B^{3}(L,L).
\end{eqnarray*}
Then there exists $h_r\in C^{1}(L, L)$ such that $(f_r,g_r)=(\delta^1_{I}h_r, \delta^1_{II}h_r)$.

Consider $\phi_t=id_L-h_rt^r,$ then $\phi_t: L\rightarrow L$ is a linear isomorphism. Thus we can define another  1-parameter formal deformation  by $\phi_t^{-1}$ in the form of
\begin{eqnarray*}
f'_t(x, y)=\big( \phi_t^{-1}f_t(\phi_t(x), \phi_t(y)), g'_t(x, y, z)\big)= \phi_t^{-1}g_t(\phi_t(x), \phi_t(y), \phi_t(z)).
\end{eqnarray*}
Set $f'_t=\sum_{i\geq 0}f'_it^i$ and use the fact $\phi_tf'_t(x, y)=f_t(\phi_t(x), \phi_t(y))$, then we have
\begin{eqnarray*}
(id_L-h_rt^r)\sum_{i\geq 0}f'_i(x, y)t^i=(f_0+\sum_{i\geq 0}f_it^i)(x-h_r(x)t^r, y-h_r(y)t^{r}),
\end{eqnarray*}
that is
\begin{eqnarray*}
&&\sum_{i\geq 0}f'_{i}(x, y)t^{i}-\sum_{i\geq 0}h_r\circ f'_{i}(x, y)t^{i+r}\\
&=&f_0(x, y)
   -f_0(h_r(x),y)t^{r}-f_0(x,h_r(y))t^r+f_0(h_r(x),h_r(y))t^{2r}\\
   &&+\sum_{i\geq r}f_{i}(x,y)t^{i}-\sum_{i\geq r}\{f_i(h_r(x),y)-f_i(x, h_r(y))\}t^{i+r}+\sum_{i\geq r}f_i(h_r(x),h_r(y))t^{i+2r}.
\end{eqnarray*}
By the above equation, it follows that
\begin{eqnarray*}
&&f'_0(x, y)=f_0(x, y)=[x, y],\\
&&f'_1(x, y)=f'_1(x, y)=\cdots=f'_{r-1}(x, y)=0,\\
&&f'_r(x, y)-h_r([x, y])=f_r(x, y)-[h_r(x),y]-[x,h_r(y)].
\end{eqnarray*}
Therefore, we deduce
\begin{eqnarray*}
f'_r(x, y)=f_r(x, y)-\delta^1_{I}h_r(x, y)=0.
\end{eqnarray*}
It follows that $f'_t=[\c, \c]+\sum_{i\geq r+1}f'_it^i$. Similarly, we have $g'_t=\{\c, \c, \c\}+\sum_{i\geq r+1}g'_it^i$
 By induction, we have $(f_t, g_t)\sim (f_0, g_0)$, that is, $L$ is   analytically rigid.
$\hfill \Box$

\section{Group action on Lie-Yamaguti algebras}

In this section, we introduce a notion of finite group actions on Lie-Yamaguti algebras and give some examples.

\begin{definition}
Let $(L,[\c,\c],\{\c,\c,\c\})$ be a Lie-Yamaguti algebra and $G$ be a finite group. Then, the group $G$ is said to be acts on $L$ if there exists a map
\[ \psi : G \times L \mapsto L,~~ (g,a) \mapsto \psi (g,a) =ga ,\] satisfying the following conditions :
\begin{enumerate}
\item For each $g \in G$, the map $\psi _g : L \mapsto L , ~~ a \mapsto ga$ is linear.
\item $ea=a$ for all $a\in L $ , where $e$ is the identity element of the group $G$.
\item $g(ha)=(gh)a$ for all $g,h \in G $ and $a\in  L$.
\item $g[a,b] = [ga,gb]$ and $ g \{a,b,c\}=\{ga,gb,gc\}$ for all $g \in G $ and $a,b,c \in L.$
\end{enumerate}
\end{definition}
\begin{example}

J. L. Loday \cite{loday63} defined (left) Leibniz algebra as a $\mathbb{K}$-linear space $L$ with a bilinear map $\cdot : L\times L \to L$ satisfying the following (left) Leibniz identity:
$$a\cdot(b\cdot c) = (a\cdot b)\cdot c + b\cdot  (a\cdot c),~\text{for all}~a,b,c\in L.$$
The notion of Leibniz algebras is a generalization of Lie algebras as in presence of skew-symmetry Leibniz identity is same as Jacobi identity. Let $(L, \cdot)$ be a (left) Leibniz algebra equipped with an action of a finite group $G$ \cite{MS19}. Leibniz algebra $L$ equipped with action of $G$ has an equivariant Lie-Yamaguti algebra structure with respect to the following defined binary and trinary operations:
$$[a, b] := a\cdot b -b \cdot a,~~~\lbrace a,b,c \rbrace := - (a\cdot b)\cdot c,~\text{for all}~a,b,c\in L.$$
\end{example}
\begin{example}
Consider a $2$-dimensional Lie-Yamaguti algebra $\mathfrak{g}$ with a basis $\{e_1,e_2\}$ defined by $[e_1,e_2]=e_1$ , $\{e_1,e_2,e_2\}=e_1$ and for a finite group $G=\{e,g\}$ of order $2$, we define $\psi : G \times \mathfrak{g} \mapsto \mathfrak{g}$ by $\psi(e,a)=a$ for all $a\in \mathfrak{g}$ and $\psi (g,a)=-a$ for all $a\in \mathfrak{g}.$ Here $G$ acts on the Lie-Yamaguti algebra $\mathfrak{g}$.
\end{example}
We use the notation $(G,L)$ to denote a Lie-Yamaguti algebra $L$ equipped with an action of $G$. Let $(G,L)$ and $(G,L^{'})$ be two Lie-Yamaguti algebra equipped with action of $G$. A morphism between $(G,L)$ and $(G,L^{'})$ is a morphism of Lie-Yamaguti algebra $\varphi : L \mapsto L^{'}$ which satisfies $\varphi (ga)=g \varphi (a)$ for all $g \in G$ and $a \in L.$
\begin{proposition}
Let $G$ be a finite group and $L$ be a Lie-Yamaguti algebra. Then $G$ acts on $L$ if and only if there exists a group homomorphism \[\phi : G \rightarrow Iso_{LYA}(L,L), g \mapsto \phi (g)\] from the group $G$ to the group of Lie-Yamaguti algebra isomorphisms from $L$ to $L.$
\end{proposition}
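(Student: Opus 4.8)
The plan is to set up the standard bijection between actions and homomorphisms into the automorphism group, translating each of the four defining axioms of a group action into the statement that $\phi$ is a well-defined group homomorphism taking values in $\mathrm{Iso}_{LYA}(L,L)$. At the outset one should record that $\mathrm{Iso}_{LYA}(L,L)$, the set of Lie-Yamaguti algebra isomorphisms from $L$ to itself, is a group under composition (the composite and the inverse of such isomorphisms are again linear bijections preserving $[\cdot,\cdot]$ and $\{\cdot,\cdot,\cdot\}$).

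First I would prove the forward implication. Suppose $\psi\colon G\times L\to L$ is an action, and define $\phi(g):=\psi_g$ for each $g\in G$. Axiom (1) says $\psi_g$ is linear, and axiom (4) says $\psi_g$ preserves both the binary and the ternary operation, so $\psi_g$ is a homomorphism of Lie-Yamaguti algebras. To see it is an isomorphism, I would observe that axioms (2) and (3) give $\psi_g\circ\psi_{g^{-1}}=\psi_{gg^{-1}}=\psi_e=\mathrm{id}_L$ and likewise $\psi_{g^{-1}}\circ\psi_g=\mathrm{id}_L$, so $\psi_g$ is bijective with inverse $\psi_{g^{-1}}$; hence $\phi(g)\in\mathrm{Iso}_{LYA}(L,L)$. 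Finally, axiom (3) rewritten as $\psi_{gh}=\psi_g\circ\psi_h$ is exactly the statement $\phi(gh)=\phi(g)\phi(h)$, so $\phi$ is a group homomorphism.

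Conversely, given a group homomorphism $\phi\colon G\to\mathrm{Iso}_{LYA}(L,L)$, I would define $\psi(g,a):=\phi(g)(a)$ and verify the four axioms in turn. Linearity of $\psi_g=\phi(g)$ is immediate, since $\phi(g)$ is in particular a linear isomorphism, which gives (1). Because $\phi$ is a homomorphism we have $\phi(e)=\mathrm{id}_L$, which is (2). The homomorphism property $\phi(gh)=\phi(g)\phi(h)$ applied pointwise gives $g(ha)=\phi(g)(\phi(h)(a))=\phi(gh)(a)=(gh)a$, which is (3). And since each $\phi(g)$ is a Lie-Yamaguti algebra homomorphism, $g[a,b]=\phi(g)([a,b])=[\phi(g)(a),\phi(g)(b)]=[ga,gb]$ and the analogous identity for $\{a,b,c\}$ hold, which is (4).

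These two constructions are mutually inverse essentially by definition, so the correspondence is a bijection; in particular the existence of one object is equivalent to the existence of the other, which is what the proposition asserts. I do not anticipate a genuine obstacle here: the only point requiring a little care is that each $\psi_g$ must be shown to be invertible, not merely an endomorphism, so that $\phi$ really lands in the isomorphism group — for this one uses the group inverse $g^{-1}$ together with axioms (2) and (3) — and, symmetrically, that in the converse direction one invokes that $\phi$ sends $e$ to the identity and respects composition, both of which are built into the definition of a group homomorphism.
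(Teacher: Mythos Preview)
Your proof is correct and follows exactly the same approach as the paper: defining $\phi(g)=\psi_g$ in the forward direction and $\psi(g,a)=\phi(g)(a)$ in the converse. In fact you supply considerably more detail than the paper, which simply asserts that $\phi$ is ``clearly'' a group homomorphism and that the converse map is ``clearly'' an action; your explicit verification that each $\psi_g$ is bijective (via $\psi_{g^{-1}}$) and your axiom-by-axiom check in both directions are improvements over the paper's sketch.
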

\begin{proof}
For a given action $(L,G)$ , define a map \[\phi : G \rightarrow Iso_{LYA}(L,L), g \mapsto \phi (g)=\psi _g\] Then clearly $\phi$ is a group homomorphism.\
Conversely, for a given group homomorphism \[\phi : G \rightarrow Iso_{LYA}(L,L), g \mapsto \phi (g)\] define a map $G\times L \rightarrow L$ by $(g,a) \mapsto \psi(g,a)=\phi(g)a.$ Clearly, this is an action of $G$ on the Lie-Yamaguti algebra $L.$
\end{proof}

Let $L$ be a Lie-Yamaguti algebra equipped with an action of $G$ and $H$ be a subgroup of $G$. Then $H$-fixed point set is denoted by $L^H$ and defined by \[L^H =\{ a \in L ~ |~ ha = a, ~ \text{for all}~ h \in H\}.\]

\begin{proposition}
Let  $(L,[\c,\c],\{\c,\c,\c\})$ be a Lie-Yamaguti algebra equipped with an action of $G$. For every subgroup $H$ of $G$, $L^H$ is a Lie-Yamaguti sub-algebra of $L$.

\end{proposition}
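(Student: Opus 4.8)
The plan is to verify directly that $L^H$ is closed under both the binary and ternary operations, and that the restrictions of these operations still satisfy the Lie-Yamaguti axioms (LY1)--(LY6). The second part is immediate: the axioms are universally quantified identities, so if they hold for all elements of $L$, they hold in particular for all elements of $L^H$. Hence the only real content is closure.

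First I would take arbitrary elements $a, b, c \in L^H$, so that $ha = a$, $hb = b$, $hc = c$ for every $h \in H$. To show $[a,b] \in L^H$, I apply condition (4) of the definition of a group action: for each $h \in H$,
\begin{eqnarray*}
h[a,b] = [ha, hb] = [a,b],
\end{eqnarray*}
so $[a,b]$ is fixed by every element of $H$, i.e. $[a,b] \in L^H$. The same argument with the ternary operation gives
\begin{eqnarray*}
h\{a,b,c\} = \{ha, hb, hc\} = \{a,b,c\},
\end{eqnarray*}
so $\{a,b,c\} \in L^H$. This shows $L^H$ is closed under both operations.

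It remains to note that $L^H$ is a linear subspace of $L$: if $a, b \in L^H$ and $\lambda \in \mathbb{K}$, then since each $\psi_h$ is linear (condition (1)), $h(\lambda a + b) = \lambda\, ha + hb = \lambda a + b$, so $\lambda a + b \in L^H$. Combining the subspace property with closure under $[\c,\c]$ and $\{\c,\c,\c\}$, and observing that axioms (LY1)--(LY6) are inherited from $L$ by restriction, we conclude that $(L^H, [\c,\c]|_{L^H}, \{\c,\c,\c\}|_{L^H})$ is a Lie-Yamaguti sub-algebra of $L$. There is no genuine obstacle here; the argument is a routine verification, and the only point worth stating carefully is the use of the $G$-equivariance of the operations in condition (4), which is exactly what makes the fixed-point set a subalgebra rather than merely a subspace.
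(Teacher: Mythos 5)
Your proof is correct and follows essentially the same route as the paper: both verify closure of $L^H$ under $[\c,\c]$ and $\{\c,\c,\c\}$ via condition (4) of the action, $h[a,b]=[ha,hb]=[a,b]$ and $h\{a,b,c\}=\{ha,hb,hc\}=\{a,b,c\}$. Your additional remarks (linearity of $\psi_h$ giving the subspace property, and inheritance of the axioms by restriction) are routine details the paper leaves implicit.
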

\begin{proof}
To prove $L^H$ is a Lie-Yamaguti sub-algebra , it is enough to show that $L^H$ is closed under $[\c,\c] ~and ~ \{\c,\c,\c\}$. Let $a,b,c \in L^H$ and $h \in H$. Then  \[h[a,b]=[ha,hb]=[a,b]\] and
\[  h\{a,b,c\}= \{ha,hb,hc\}=\{a,b,c\}.\]
Therefore $[a,b]$ and $\{a,b,c\} \in L^H$.
Thus, $L^H$ is a Lie-Yamaguti sub-algebra.
\end{proof}

\section{Equivariant cohomology of Lie-Yamaguti algebras}

In this section, we introduce an equivariant cohomology of  Lie-Yamaguti algebras equipped with finite group
actions.

\begin{definition}
Let $G$ be a fixed finite group acting on a Lie-Yamaguti algebra  $(L,[\c,\c],\{\c,\c,\c\})$  and $(V, \rho, D, \theta)$ be a representation of $L$. Then the equivariant action of $G$ on $V$ is defined by a map $G \times V \mapsto V $ satisfying the following conditions
\begin{enumerate}
\item $(e,v) \mapsto v$, for all $v \in V$, where $e$ is the identity element of the group $G.$
\item $g_1(g_2v)=(g_1g_2)v$ for all $g_1,g_2 \in G$ and $v \in V.$
\item $g(v_1+cv_2)=gv_1 +c(gv_2)$ for any $g_1,g_2 \in G$ and for any scaler $c.$
\item $\rho(gx)(v)=g( \rho (x) (g^{-1}v))$
\item $D(gx,gy)(v)=g(D(x,y)(g^{-1}v))$
\item $\theta(gx,gy)(v)=g( \theta(x ,y)(g^{-1}v))$
where $x,y,z \in L$ and $v \in V. $
\end{enumerate}

In this case, we say that the Lie-Yamaguti algebra $L$ is equipped with an equivariant action of a finite group $G$ compatible with the  representation of $L$ by a vector space $V$.

\end{definition}
\begin{example}
Let $L$ be a Lie-Yamaguti algebra and $G$ be a group action on $L$. Consider the natural adjoint representation on itself where $\rho , D , \theta $ are given by
\begin{eqnarray*}
\rho(x)(y):= [x, y],~~ D(x, y)z:= \{x, y, z\},~~ \theta(x, y)z:= \{z, x, y\},~ for ~~x,y,z \in L.
\end{eqnarray*}
Now we can see that
\begin{eqnarray*}
\rho(gx)(v)&=&[gx,v] \\
&=&g [x,g^{-1}v] \\
&=&g \rho (x)(g^{-1}(v)).
\end{eqnarray*}
Similarly, $D(gx,gy)(v)=g(D(x,y)(g^{-1}v))$ and  $\theta(gx,gy)(v)=g( \theta(x ,y)(g^{-1}v))$ holds for any $v \in L.$ Thus, $L$ is equipped with the action of $G$ compatible with the adjoint representation.
\end{example}
\begin{definition}
Let  $(L,[\c ,\c],\{\c,\c,\c\})$ be a Lie-Yamaguti algebra and $L$ is equipped with an equivariant action of a finite group $G$ compatible with a representation of $L$ by a vector space $V$. We define
\begin{eqnarray*}
C_{G}^{n}(L, V)= \{f \in C^{n}(L, V) : f(gx_1, \ldots, gx_{i}, \ldots, gx_{n})=gf(x_1, \ldots, x_{i}, \ldots, x_{n})\}
\end{eqnarray*}
Clearly $ C_{G}^{n}(L, V)$ forms a subspace of $C^{n}(L, V)$. The elements of this space is called an equivariant $n$-cochain of $L$, for $n\geq 1$.
\end{definition}
\begin{lemma}
For any $(f,h) \in C_G^{2n}(L,V) \times C_G^{2n+1}(L,V)$ , the coboundary operator $\delta (f,h)=(\delta _If,\delta _{II} h) \in C_G^{2n+2}(L,V) \times C_G^{2n+3}(L,V) $.

\end{lemma}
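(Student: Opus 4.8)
The plan is to verify only that $\delta_I f$ and $\delta_{II} h$ are \emph{equivariant}; that they are genuine elements of $C^{2n+2}(L,V)$ and $C^{2n+3}(L,V)$ (multilinear, and vanishing whenever two consecutive arguments coincide) is already built into the definition of the coboundary operator $\delta$. So the whole content of the lemma is the two identities
\[
(\delta_I f)(gx_1,\dots,gx_{2n+2}) = g\,(\delta_I f)(x_1,\dots,x_{2n+2}),
\qquad
(\delta_{II} h)(gx_1,\dots,gx_{2n+3}) = g\,(\delta_{II} h)(x_1,\dots,x_{2n+3}),
\]
for every $g\in G$ and all $x_i\in L$ (recall that $\delta_I$ involves both $f$ and $h$, whereas $\delta_{II}$ involves only $h$).

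First I would fix $g\in G$ and substitute $gx_1,\dots,gx_{2n+2}$ into the defining formula for $\delta_I f$, then simplify it term by term using exactly three facts: (i) $G$ acts by Lie-Yamaguti automorphisms, so $[gx,gy]=g[x,y]$ and $\{gx,gy,gz\}=g\{x,y,z\}$; (ii) the compatibility of the $G$-actions on $L$ and $V$ with the structure maps, $\rho(gx)(v)=g(\rho(x)(g^{-1}v))$, $D(gx,gy)(v)=g(D(x,y)(g^{-1}v))$, $\theta(gx,gy)(v)=g(\theta(x,y)(g^{-1}v))$; and (iii) equivariance of $f$ and of $h$. For the three leading terms the pattern is, for instance, $\rho(gx_{2n+1})\,h(gx_1,\dots,gx_{2n},gx_{2n+2}) = \rho(gx_{2n+1})\big(g\,h(x_1,\dots,x_{2n},x_{2n+2})\big) = g\big(\rho(x_{2n+1})\,h(x_1,\dots,x_{2n},x_{2n+2})\big)$, using (iii) and then (ii) with the $g^{-1}g$ cancelling; the swapped-index term is handled the same way, and the bracket term reduces by $[gx_{2n+1},gx_{2n+2}]=g[x_{2n+1},x_{2n+2}]$ followed by (iii). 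For the single sum, each summand $D(gx_{2k-1},gx_{2k})\,f(gx_1,\dots)$ produces a factor $g$ in front by (ii) and (iii); for the double sum, each summand $f(gx_1,\dots,\{gx_{2k-1},gx_{2k},gx_j\},\dots)$ produces a factor $g$ by (i) and (iii). Thus every term yields a single $g$ in front; since $\psi_g$ is linear it commutes with the signed sums, so collecting terms gives the first identity, i.e.\ $\delta_I f\in C_{G}^{2n+2}(L,V)$.

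The computation for $\delta_{II} h$ is identical, with $\rho$ replaced by $\theta$ in the two leading terms (via the $\theta$-compatibility identity) and $h$ in place of $f$ throughout, giving $\delta_{II} h\in C_{G}^{2n+3}(L,V)$. I expect no genuine obstacle here — the argument is purely mechanical — and the only thing to watch is the bookkeeping: making sure that in each term the $G$-action is carried past precisely the right map (an automorphism of $L$ for the brackets, the appropriate compatibility identity for $\rho$, $D$, $\theta$, and the equivariance of the cochain for $f$ or $h$), so that the uniform "$g$ factors out in front" behaviour holds across both special terms and both summations.
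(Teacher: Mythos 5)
Your proposal is correct and follows essentially the same route as the paper: substitute $gx_1,\dots$ into the defining formulas for $\delta_I f$ and $\delta_{II}h$, then factor a single $g$ out of each term using the automorphism property of the action, the compatibility identities for $\rho$, $D$, $\theta$, and the equivariance of $f$ and $h$, finally pulling $g$ through the signed sums by linearity. The only difference is cosmetic: you state explicitly (what the paper leaves implicit) that $\delta_I f$ and $\delta_{II}h$ already lie in $C^{2n+2}(L,V)$ and $C^{2n+3}(L,V)$ by the definition of $\delta$, so only equivariance needs checking.
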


\begin{proof}
Let $g$ be an element of the group $G.$ Now,
\begin{eqnarray*}
&& (\delta_If)(gx_1,gx_2,\ldots ,gx_{2n+2})\\
&=&\rho(gx_{2n+1})h(gx_1,gx_2,\ldots,gx_{2n},gx_{2n+2})-\rho(gx_{2n+2})h(gx_1,gx_2,\ldots,gx_{2n},gx_{2n+1})\\
  &&-h(gx_1,gx_2,\ldots,gx_{2n},[gx_{2n+1},gx_{2n+2}])\\
  &&+\sum_{k=1}^{n}(-1)^{n+k+1}D(gx_{2k-1},gx_{2k})f(gx_{1},\ldots, \widehat{gx}_{2k-1},\widehat{gx}_{2k},\ldots,g x_{2n+2})\\
   &&+\sum_{k=1}^{n}\sum_{j=2k+1}^{2n+2}(-1)^{n+k}f(g x_{1},\ldots,\widehat{gx}_{2k-1},\widehat{gx}_{2k},\ldots,\{gx_{2k-1},gx_{2k},gx_j\},\ldots,g x_{2n+2}),\\
&=&g[\rho(x_{2n+1})h(x_1,x_2,\ldots,x_{2n},x_{2n+2})]-g[\rho(x_{2n+2})h(x_1,x_2,\ldots,x_{2n},x_{2n+1})]\\
  &&-g[h(x_1,x_2,\ldots,x_{2n},[x_{2n+1},x_{2n+2}])]\\
  &&+g[\sum_{k=1}^{n}(-1)^{n+k+1}D(x_{2k-1},x_{2k})f(x_{1},\ldots,\widehat{x}_{2k-1},\widehat{x}_{2k},\ldots,x_{2n+2})]\\
   &&+g [\sum_{k=1}^{n}\sum_{j=2k+1}^{2n+2}(-1)^{n+k}f(x_{1},\ldots,\widehat{x}_{2k-1},\widehat{x}_{2k},\ldots,\{x_{2k-1},x_{2k},x_j\},\ldots,x_{2n+2})],\\
 &=&g(\delta_If)(x_1,x_2,\ldots,x_{2n+2}).
\end{eqnarray*}
On the other hand,
\begin{eqnarray*}
&&(\delta_{II}h)(gx_1,gx_2,\ldots,g x_{2n+3})\\
&=&\theta(g x_{2n+2},g x_{2n+3})h(g x_1,\ldots,g x_{2n+1})-\theta(g x_{2n+1},g x_{2n+3})h(gx_1,\ldots,gx_{2n},gx_{2n+1})\\
  &&+\sum_{k=1}^{n+1}(-1)^{n+k+1}D(gx_{2k-1},gx_{2k})h(gx_{1},\ldots,\widehat{gx}_{2k-1},\widehat{gx}_{2k},\ldots,gx_{2n+3})\\
   &&+\sum_{k=1}^{n+1}\sum_{j=2k+1}^{2n+3}(-1)^{n+k}h(gx_{1},\ldots,\widehat{gx}_{2k-1},\widehat{gx}_{2k},\ldots,\{gx_{2k-1},gx_{2k},gx_j\},\ldots,gx_{2n+3}).\\
&=&g[\theta(x_{2n+2},x_{2n+3})h(x_1,\ldots,x_{2n+1})]-g[\theta(x_{2n+1},x_{2n+3})h(x_1,\ldots,x_{2n},x_{2n+1})]\\
  &&+g[\sum_{k=1}^{n+1}(-1)^{n+k+1}D(x_{2k-1},x_{2k})h(x_{1},\ldots,\widehat{x}_{2k-1},\widehat{x}_{2k},\ldots,x_{2n+3})]\\
   &&+g[\sum_{k=1}^{n+1}\sum_{j=2k+1}^{2n+3}(-1)^{n+k}h(x_{1},\ldots,\widehat{x}_{2k-1},\widehat{x}_{2k},\ldots,\{x_{2k-1},x_{2k},x_j\},\ldots,x_{2n+3})]. \\
   &=&g(\delta_{II}h)(x_1,x_2,\ldots,x_{2n+3}).
   \end{eqnarray*}
 Therefore, the coboundary operator $\delta (f,h)=(\delta _If,\delta _{II} h) \in C_G^{2n+2}(L,V) \times C_G^{2n+3}(L,V) $.

\end{proof}
Let $Z_G^{2n}(L, V) \times Z_G^{2n+1}(L, V )$ be the subspace of $C_G^{2n}(L, V) \times C_G^{2n+1}(L, V )$ spanned by $(f, h)$
such that $\delta(f, h) = 0$ which is called the space of equivariant  cocycles and $B_G^{2n}(L, V ) \times B_G^{2n+1}(L, V ) =
\delta(C_G^{2n-2}(L, V ) \times C_G^{2n-1}(L, V ))$ which is called the space of equivariant coboundaries.

\begin{definition}
Let $G$ be a fixed finite group, for the case $n\geq 2$, the $(2n, 2n + 1)$- equivariant cohomology group of a  Lie-Yamaguti  algebra $(L, [\c, \c], \{\c, \c, \c\})$ with
coefficients in $V$ is defined to be the quotient space:
\begin{eqnarray*}
 H_G^{2n}(L, V)\times H_G^{2n+1}(L, V):=(Z_G^{2n}(L, V) \times Z^{2n+1}(L, V ))/(B_G^{2n}(L, V) \times B_G^{2n+1}(L, V )).
\end{eqnarray*}
\end{definition}

Thus we have a sub-cochain complex of the cochain complex  defined in Definition 2.5 whose cohomology groups are defined to be  equivariant cohomology group.

\section{Equivariant one-parameter formal deformations of  Lie-Yamaguti algebras}

In this section, we define an equivariant one-parameter formal deformations of Lie-Yamaguti algebras equipped with
an action of a finite group.

\begin{definition}
Let $(L, [\c, \c], \{\c, \c, \c\})$  be a Lie-Yamaguti algebra equipped with an action of a finite group $G$ compatible with a representation of $L$ by a vector space $ V$. Then,
 a equivariant  1-parameter formal deformations of $L$  is a pair of formal power series
 $(f_{t}, g_t)$ of the form
\begin{eqnarray*}
f_{t}=[\c, \c]+\sum_{i\geq 1}f_{i}t^i,~~~~g_{t}=\{\c, \c, \c\}+\sum_{i\geq 1}g_{i}t^i,
\end{eqnarray*}
where each $f_{i}$ is a $\mathbb{K}$-bilinear map $f_{i}: L\times L\rightarrow L$  (extended to be $\mathbb{K}[[t]]$-bilinear) and each $g_{i}$ is a $\mathbb{K}$-trilinear map $g_{i}: L\times L \times L\rightarrow L$  (extended to be $\mathbb{K}[[t]]$-trilinear) such that
\begin{enumerate}
\item  $(L[[t]], f_t, g_t)$ is a  Lie-Yamaguti algebra  over $\mathbb{K}[[t]]$.
\item $f_i(ga,gb)=g(f_i(a,b))$ and $g_i(ga,gb,gc)=g (g_i(a,b,c))$ for all $a,b,c \in L $ , $g \in G$ and  $i\geq 1$.
 \end{enumerate}
\end{definition}
Note that in the above definition if we write  $f_0=[\c,\c] $ and $g_0=\{\c,\c,\c\}$, then $f_t$ and $g_t$ can be written as $f_t=\sum_{i\geq 0}f_it^i$ and $g_t=\sum_{i\geq 0}g_it^i$.\\

\begin{proposition}
Let $(f_t,h_t)$ be an equivariant  1-parameter formal deformation of a Lie-Yamaguti algebra $(L, [\c, \c], \{\c, \c, \c\})$. Then $(f, h)\in C_G^2(L, L)\times C_G^3(L, L)$.
\end{proposition}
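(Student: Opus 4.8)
The plan is to combine two facts already established in the excerpt: first, that for an arbitrary (non-equivariant) 1-parameter formal deformation $(f_t,g_t)$ of a Lie-Yamaguti algebra the pair $(f_1,g_1)$ — indeed, by Eqs.~(2.7)--(2.8), the pair $(f,g)$ of leading terms — lies in $C^2(L,L)\times C^3(L,L)$ (Remark 2.8); and second, the definition of the equivariant cochain spaces $C_G^n(L,V)$ as those cochains commuting with the $G$-action. So the proof reduces to checking the two requirements that distinguish $C_G^2(L,L)\times C_G^3(L,L)$ from $C^2(L,L)\times C^3(L,L)$: the multilinearity/alternating conditions, which come for free from the underlying non-equivariant statement, and the $G$-equivariance $f(ga,gb)=g\,f(a,b)$, $h(ga,gb,gc)=g\,h(a,b,c)$.

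First I would invoke Remark 2.8 (or directly Eqs.~(2.1)--(2.2) applied to the first-order terms) to conclude that $f=f_1$ is a $\mathbb{K}$-bilinear map $L\times L\to L$ with $f_1(x,x)=0$ and $h=g_1$ is a $\mathbb{K}$-trilinear map with $g_1(x,x,y)=0$; this is precisely the assertion $(f,h)\in C^2(L,L)\times C^3(L,L)$. Then I would simply quote condition (2) in the definition of an equivariant 1-parameter formal deformation (Definition 5.1): it states verbatim that $f_i(ga,gb)=g\,f_i(a,b)$ and $g_i(ga,gb,gc)=g\,g_i(a,b,c)$ for all $a,b,c\in L$, $g\in G$, and $i\ge 1$. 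Taking $i=1$ gives exactly the equivariance conditions defining $C_G^2(L,L)$ and $C_G^3(L,L)$ (Definition 4.4, with $V=L$ and the adjoint action). Combining the two observations yields $(f,h)=(f_1,g_1)\in C_G^2(L,L)\times C_G^3(L,L)$.

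There is essentially no obstacle here: the statement is a bookkeeping consequence of the definitions, and the only thing to be careful about is matching notation — the proposition writes $(f,h)$ where the deformation is called $(f_t,h_t)$, so I would first note $f:=f_1$, $h:=h_1$ are the coefficients of $t^1$, and that the group acts on $L$ as a module over itself via the adjoint representation (Example 4.2), so that $C_G^n(L,L)$ makes sense. If one wants to be thorough, one can spell out that the $t^0$-coefficients $f_0=[\cdot,\cdot]$ and $h_0=\{\cdot,\cdot,\cdot\}$ are already $G$-equivariant by condition (4) of Definition 3.1, but this is not needed for the stated conclusion, which concerns only the first-order terms $f=f_1$ and $h=h_1$.
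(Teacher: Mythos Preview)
Your proposal is correct and follows essentially the same two-step approach as the paper: first invoke the non-equivariant fact (Remark~2.7) that the deformation equations force $(f,h)\in C^2(L,L)\times C^3(L,L)$, then quote condition~(2) of Definition~5.1 to obtain the $G$-equivariance needed for membership in $C_G^2(L,L)\times C_G^3(L,L)$.

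One small point worth flagging: you read the ambiguous symbol $(f,h)$ as the first-order pair $(f_1,h_1)$, whereas the paper's own proof writes the equivariance condition for general $f_n,h_n$, suggesting that the intended conclusion is that \emph{every} coefficient pair $(f_n,h_n)$ lies in $C_G^2(L,L)\times C_G^3(L,L)$ (this is consistent with the paper's equally loose use of ``$(f,g)\in C^2(L,L)\times C^3(L,L)$'' in Remark~2.7). Your argument extends verbatim to all $n\ge 1$ by simply not specializing $i=1$, so this is a matter of reading the notation rather than a gap in the mathematics.
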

\begin{proof}
From the deformation equations of a Lie-Yamaguti algebra we have $(f, g)\in C^2(L, L)\times C^3(L, L)$.
Again, from the definition of equivariant deformation of Lie-Yamaguti algebra we have
\begin{eqnarray*}
&&f_n(ga,gb)=g(f_n(a,b))\\
&&h_n(ga,gb,gc)=g(h_n(a,b,c)),
\end{eqnarray*}
for all $a,b,c \in L$ and for all $g\in G$.
Therefore, $(f, h)\in C_G^2(L, L)\times C_G^3(L, L)$.
\end{proof}

\begin{proposition}
Let $(f_t,h_t)$ be an equivariant  1-parameter formal deformation of a Lie-Yamaguti algebra $(L, [\c, \c], \{\c, \c, \c\})$. Then $(f_1, h_1)\in Z_G^2(L, L)\times Z_G^3(L, L)$. This $(f_1,g_1)$ is called the infinitesimal equivariant deformation of  $(f_t,h_t).$
\end{proposition}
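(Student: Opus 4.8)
The plan is to reduce the statement to two ingredients that an equivariant deformation carries \emph{by definition}, combined with the non-equivariant computation already performed in Section~2.

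First I would observe that, by condition~(1) in the definition of an equivariant $1$-parameter formal deformation, the pair $(f_t,h_t)$ is in particular an ordinary $1$-parameter formal deformation of $L$. Hence the computation carried out just before Definition~2.9 applies verbatim: setting $n=1$ in the deformation equations (2.9)--(2.12) (equivalently, comparing the coefficients of $t^1$ on both sides) yields $(\delta^2_I,\delta^2_{II})(f_1,h_1)=(0,0)$, that is, $(f_1,h_1)\in Z^2(L,L)\times Z^3(L,L)$. No new work is needed for this part; it is exactly the infinitesimal-deformation statement of Section~2.

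Next I would invoke condition~(2) in the definition of an equivariant deformation: each component satisfies $f_i(ga,gb)=g(f_i(a,b))$ and $h_i(ga,gb,gc)=g(h_i(a,b,c))$ for all $g\in G$ and $a,b,c\in L$. Taking $i=1$, this is precisely the defining condition for $f_1\in C_G^2(L,L)$ and $h_1\in C_G^3(L,L)$; this is the same reasoning already used in Proposition~5.2. Thus $(f_1,h_1)\in C_G^2(L,L)\times C_G^3(L,L)$. Combining the two observations, and recalling that $Z_G^2(L,L)\times Z_G^3(L,L)$ is by definition the subspace of $C_G^2(L,L)\times C_G^3(L,L)$ annihilated by $\delta$, we conclude $(f_1,h_1)\in Z_G^2(L,L)\times Z_G^3(L,L)$, which is the assertion.

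I expect essentially no serious obstacle here; the only point deserving a line of care is that the coboundary operator appearing in the definition of $Z_G$ is the \emph{restriction} of the operator of Definition~2.5 (this is exactly the content of Lemma~4.5, which says $\delta$ preserves the equivariant subcomplex), so the cocycle identity ``$\delta(f_1,h_1)=0$'' means the same thing in $C_G$ as in $C$, and only the equivariance of $f_1$ and $h_1$ has to be supplied on top of the Section~2 argument.
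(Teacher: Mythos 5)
Your proposal is correct and follows the paper's own argument essentially verbatim: the paper likewise cites the Section~2 computation (Remark~2.7) to get $(f_1,h_1)\in Z^2(L,L)\times Z^3(L,L)$ and then invokes the equivariance condition $f_1(ga,gb)=g f_1(a,b)$, $h_1(ga,gb,gc)=g h_1(a,b,c)$ from the definition of an equivariant deformation to conclude membership in $Z_G^2(L,L)\times Z_G^3(L,L)$. Your extra remark that the equivariant cocycle condition is the restriction of the ordinary one (via Lemma~4.5) is a sensible clarification but not a different route.
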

\begin{proof}
 From Remarks 2.7 we have
 \begin{eqnarray*}
 (f_1,g_1)\in Z^2(L, L)\times Z^3(L, L).
 \end{eqnarray*}
Again, from the definition of equivariant deformation of Lie-Yamaguti algebra we have
\begin{eqnarray*}
&&f_1(ga,gb)=g(f_1(a,b))\\
&&h_1(ga,gb,gc)=g(h_1(a,b,c)),
\end{eqnarray*}
for all $a,b,c \in L.$
Therefore, $(f_1, h_1)\in Z_G^2(L, L)\times Z_G^3(L, L)$.

\end{proof}
\begin{definition}
Let $(L, [\c, \c], \{\c, \c, \c\})$  be a Lie-Yamaguti algebra.
 Two equivariant 1-parameter formal deformations $(f_{t}, g_t)$ and $(f'_{t}, g'_t)$  of $L$ are said to be equivalent, denoted by $(f_{t}, g_t)\sim (f'_{t}, g'_t)$,
 if there exists a formal isomorphism of  $\mathbb{K}[[t]]$-modules
  \begin{eqnarray*}
\phi_{t}(x)=\sum_{i\geq 0}\phi_{i}(x)t^{i}:(L[[t]],f_{t},g_t)\rightarrow (L[[t]],f'_{t},g'(t)),
\end{eqnarray*}
where $\phi_{i}:L \rightarrow L$ is a $\mathbb{K}$-linear map (extended to be $\mathbb{K}[[t]]$-linear) such that
 \begin{eqnarray*}
&& \phi_{0}=id_L, \phi_{t}\circ f_t(x, y)=f'_t(\phi_{t}(x),\phi_{t}(y)),\\
&& \phi_{t}\circ g_t(x, y, z)=g'_t(\phi_{t}(x),\phi_{t}(y), \phi_{t}(z)),\\
&& \phi_i (gx)=g\phi_i(x)~~~ \mbox{for all}~~~ i\geq 0~~~\mbox{for all}~~ g \in G.
\end{eqnarray*}
\end{definition}

 In particular, if $(f_1, g_1)=(f_2, g_2)=\cdots =(0, 0),$ then  $(f_t, g_t)=(f_0, g_0)$ is called the null equivariant  deformation.
If  $(f_t, g_t)\sim (f_0, g_0)$, then $(f_t, g_t)$ is called the trivial equivariant  deformation.
If every equivariant 1-parameter formal deformation $(f_t, g_t)$ is trivial, then $L$ is called an analytically rigid equivariant  Lie-Yamaguti algebra.

\begin{theorem}
Let $(f_{t}, g_t)$ and $(f'_{t}, g'_t)$ be
 two equivalent equivariant 1-parameter formal deformations  of $L$.
Then the infinitesimal deformations $(f_1, g_1)$ and $(f'_1, g'_1)$
belong to the same cohomology class in  $H_G^{2}(L,L)\times H_G^{3}(L,L).$
\end{theorem}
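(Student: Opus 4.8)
The plan is to transcribe, step by step, the proof of the non-equivariant version (the Theorem in Section~2 asserting that equivalent deformations have cohomologous infinitesimals), upgrading every object to its $G$-equivariant counterpart. First I would invoke the preceding Proposition to record that both infinitesimals $(f_1,g_1)$ and $(f'_1,g'_1)$ already lie in $Z_G^{2}(L,L)\times Z_G^{3}(L,L)$; hence it remains only to prove that their difference is an \emph{equivariant} coboundary.

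Next, from the hypothesis $(f_t,g_t)\sim(f'_t,g'_t)$ I would take the formal isomorphism $\phi_t(x)=\sum_{i\ge 0}\phi_i(x)t^{i}$ with $\phi_0=\mathrm{id}_L$, and expand the two compatibility identities
\[
\phi_t\circ f_t(x,y)=f'_t(\phi_t(x),\phi_t(y)),\qquad
\phi_t\circ g_t(x,y,z)=g'_t(\phi_t(x),\phi_t(y),\phi_t(z)).
\]
Comparing the coefficients of $t^{1}$ on both sides, exactly as in the non-equivariant computation, yields
\[
f_1(x,y)-f'_1(x,y)=(\delta^{1}_{I}\phi_1)(x,y),\qquad
g_1(x,y,z)-g'_1(x,y,z)=(\delta^{1}_{II}\phi_1)(x,y,z),
\]
so that $(f_1-f'_1,\,g_1-g'_1)=\delta^{1}(\phi_1,\phi_1)$.

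The only genuinely new ingredient, and the point at which the group action enters, is that $\phi_1$ is $G$-equivariant: the definition of equivalence of \emph{equivariant} deformations requires $\phi_i(gx)=g\phi_i(x)$ for every $i\ge 0$, in particular for $i=1$, so $\phi_1\in C_G^{1}(L,L)$. By the Lemma of Section~4 (that $\delta$ carries $C_G^{\bullet}$ into $C_G^{\bullet}$), we get $\delta^{1}(\phi_1,\phi_1)\in B_G^{2}(L,L)\times B_G^{3}(L,L)$. Therefore $(f_1,g_1)$ and $(f'_1,g'_1)$ differ by an equivariant coboundary and represent the same class in $H_G^{2}(L,L)\times H_G^{3}(L,L)$, as desired.

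Since every step is either a coefficient comparison already performed in the non-equivariant case or an appeal to a result proved earlier in the paper, I do not anticipate a real obstacle. The one place requiring care is the bookkeeping in the $t^{1}$-comparison for the ternary part: one must check that the three insertion terms $\{\phi_1(x),y,z\}+\{x,\phi_1(y),z\}+\{x,y,\phi_1(z)\}$, together with $\phi_1(\{x,y,z\})$, reassemble precisely into $(\delta^{1}_{II}\phi_1)(x,y,z)$, just as displayed in the proof of the non-equivariant theorem.
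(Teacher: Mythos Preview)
Your proposal is correct and follows essentially the same approach as the paper: obtain $(f_1-f'_1,\,g_1-g'_1)=\delta^{1}(\phi_1,\phi_1)$ from the non-equivariant computation, then use the equivariance of $\phi_1$ (built into the definition of equivariant equivalence) to conclude this lies in $B_G^{2}(L,L)\times B_G^{3}(L,L)$. The only cosmetic difference is that the paper writes out the equivariance verification for $f_1-f'_1$ and $g_1-g'_1$ explicitly rather than appealing to the lemma of Section~4; note that lemma, as stated, covers pairs in $C_G^{2n}\times C_G^{2n+1}$, so your invocation of it for $\phi_1\in C_G^{1}$ tacitly uses the obvious (and immediate) extension to the degree-one coboundary.
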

\begin{proof}
From Theorem 2.9 we have $(f_1-f'_1, g_1-g'_1)=(\delta^{1}_{I}, \delta^{1}_{II})(\phi_1, \phi_1)\in B^{2}(L,L)\times B^{3}(L,L),$
and for any $x_1,x_2 \in L, g\in G$
\begin{eqnarray*}
&&(f_1-f_1^{'})(gx_1,gx_2)\\
&=&\phi_1([gx_1,gx_2])-[\phi_{1}(gx_1),gx_2]-[gx_1,\phi_{1}(gx_2)],\\
&=&\phi_1(g[x_1,x_2])-[g\phi_{1}(x_1),gx_2]-[gx_1,g\phi_{1}(x_2)],\\
&=&g(\phi_1([x_1,x_2])-[\phi_{1}(x_1),x_2]-[x_1,\phi_{1}(x_2)]),\\
&=&g(f_1-f_1^{'})(x_1,x_2).
\end{eqnarray*}
Again from the proof of Theorem 2.9 for any $x_1,x_2, x_3 \in L, g\in G$
\begin{eqnarray*}
&&(g_1-g_1')(gx_1,gx_2,gx_3)\\
&=&\phi_1(\{gx_1,gx_2,gx_3\})-\{\phi_{1}(gx_1),gx_2, gx_3\}-\{gx_1,\phi_{1}(gx_2), gx_3\}-\{gx_1,gx_2, \phi_{1}(gx_3)\}.\\
&=&\phi_1(g\{x_1,x_2,x_3\})-\{g\phi_{1}(x_1),gx_2, gx_3\}-\{gx_1,g\phi_{1}(x_2), gx_3\}-\{gx_1,gx_2, g\phi_{1}(x_3)\}.\\
&=&g(\phi_1(\{x_1,x_2,x_3\})-\{\phi_{1}(x_1),x_2, x_3\}-\{x_1,\phi_{1}(x_2), x_3\}-\{x_1,x_2, \phi_{1}(x_3)\}).\\
&=&g(g_1-g_1')(x_1,x_2,x_3).
\end{eqnarray*}
Therefore $(f_1-f'_1, g_1-g'_1)\in B_G^{2}(L,L)\times B_G^{3}(L,L)$. This proves the theorem.
\end{proof}
\begin{theorem}
Let $(L, [\c, \c], \{\c, \c, \c\})$  be a Lie-Yamaguti algebra with $H_G^{2}(L,L)\times H_G^{3}(L,L)=0$, then $L$ is equivariantly rigid Lie-Yamaguti algebra.
\end{theorem}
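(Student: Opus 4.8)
The plan is to mimic the proof of Theorem~\ref{thm 210} (the non-equivariant rigidity theorem), keeping careful track of the $G$-equivariance of every map that is constructed along the way. Let $(f_t, g_t)$ be an equivariant $1$-parameter formal deformation of $L$, and write $f_t = f_0 + \sum_{i \geq r} f_i t^i$, $g_t = g_0 + \sum_{i \geq r} g_i t^i$ with $(f_r, g_r) \neq (0,0)$ the first nonzero term. As in the proof of Theorem~\ref{thm 210}, setting $n = r$ in the deformation equations (2.7)--(2.8) shows that $(f_r, g_r) \in Z^2(L,L) \times Z^3(L,L)$; and because the deformation is equivariant, Proposition~5.3 (applied to the truncated cochains, or rather the same argument that proves it) gives $f_r(ga,gb) = g\,f_r(a,b)$ and $g_r(ga,gb,gc) = g\,g_r(a,b,c)$, so in fact $(f_r, g_r) \in Z_G^2(L,L) \times Z_G^3(L,L)$.

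Next I would invoke the hypothesis $H_G^2(L,L) \times H_G^3(L,L) = 0$ to conclude that $(f_r, g_r) \in B_G^2(L,L) \times B_G^3(L,L)$, so there exists $h_r \in C_G^1(L,L)$ — that is, a $G$-equivariant linear map $h_r : L \to L$ — with $(f_r, g_r) = (\delta_I^1 h_r, \delta_{II}^1 h_r)$. Here lies the one genuinely new point compared to the classical argument: the equivariant coboundary space is by definition the image under $\delta$ of \emph{equivariant} $1$-cochains, so the primitive $h_r$ may be chosen $G$-equivariant, and this is exactly what makes the inductive construction stay inside the equivariant world. Then set $\phi_t = \mathrm{id}_L - h_r t^r$, which is a formal linear automorphism of $L[[t]]$; since $\phi_i = 0$ for $0 < i < r$, $\phi_r = -h_r$ is $G$-equivariant and $\phi_0 = \mathrm{id}_L$, this $\phi_t$ satisfies the equivariance condition $\phi_i(gx) = g\phi_i(x)$ required in Definition~5.5.

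Now define a new deformation $(f_t', g_t')$ by transporting $(f_t, g_t)$ through $\phi_t$, i.e.\ $f_t'(x,y) = \phi_t^{-1} f_t(\phi_t x, \phi_t y)$ and $g_t'(x,y,z) = \phi_t^{-1} g_t(\phi_t x, \phi_t y, \phi_t z)$. Repeating verbatim the coefficient comparison from the proof of Theorem~\ref{thm 210} yields $f_i' = 0$ and $g_i' = 0$ for $1 \leq i \leq r-1$ and $f_r' = f_r - \delta_I^1 h_r = 0$, $g_r' = g_r - \delta_{II}^1 h_r = 0$, so $(f_t', g_t')$ starts at order $r+1$. Moreover $(f_t', g_t')$ is again an equivariant deformation: for each $g \in G$ and $i$, $f_i'(gx,gy) = g f_i'(x,y)$ because it is built out of the $G$-equivariant maps $f_i$, $\phi_t$, $\phi_t^{-1}$ — this is the routine verification I would spell out once. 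Iterating, the order of the leading nontrivial term can be pushed to infinity, so $(f_t, g_t) \sim (f_0, g_0)$; hence every equivariant deformation is trivial and $L$ is equivariantly rigid.

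The main obstacle — really the only conceptual step — is ensuring that the primitive $h_r$ can be taken $G$-equivariant; this is not automatic from $(f_r,g_r)$ being an equivariant cocycle in the ordinary sense, but it follows immediately from the \emph{definition} of $B_G^{\bullet}$ as $\delta(C_G^{\bullet-1})$ together with the vanishing hypothesis $H_G^2 \times H_G^3 = 0$. Everything else is the bookkeeping already carried out in Section~2, now decorated with the observation that each constructed object commutes with the $G$-action, which one checks by applying $g$ and using conditions (1)--(4) of Definition~3.1 and the equivariance of the $f_i, g_i, \phi_i$.
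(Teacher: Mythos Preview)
Your proposal is correct and follows essentially the same approach as the paper's proof: both establish that the leading term $(f_r,g_r)$ is an equivariant cocycle, use the vanishing of $H_G^2\times H_G^3$ to choose an \emph{equivariant} primitive $h_r\in C_G^1(L,L)$, build the equivariant isomorphism $\phi_t=\mathrm{id}_L-h_rt^r$, and transport to obtain an equivalent equivariant deformation with higher-order leading term, then iterate. Your write-up is in fact slightly cleaner in that you explicitly verify the new deformation $(f_t',g_t')$ remains equivariant, a point the paper leaves implicit.
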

\begin{proof}
Let $(f_{t}, g_t)$ be an equivariant  1-parameter formal deformation  of $L$. Suppose $f_t=f_0+\sum_{i\geq r}f_it^i$, $g_t=g_0+\sum_{i\geq r}g_it^i$.
Then from the proof of Theorem \ref{thm 210}, we have
\begin{eqnarray*}
(f_r,g_r)\in Z^{2}(L,L)\times Z^{3}(L,L)=B^{2}(L,L)\times B^{3}(L,L).
\end{eqnarray*}
Again, since $f_r(gx_1,gx_2)=g(f_r(x_1,x_2))$ and $(g_r(gx_1,gx_2,gx_3))=g(g_r(x_1,x_2,x_3))$,
for any $x_1,x_2,x_3 \in L$ and  $g\in G.$ Hence
\begin{eqnarray*}
(f_r,g_r)\in Z_G^{2}(L,L)\times Z_G^{3}(L,L)=B_G^{2}(L,L)\times B_G^{3}(L,L).
\end{eqnarray*}
Then there exists $h_r\in C_G^{1}(L, L)$ such that $(f_r,g_r)=(\delta^1_{I}h_r, \delta^1_{II}h_r)$.

Consider $\phi_t=id_L-h_rt^r,$ then $\phi_t: L\rightarrow L$ is a linear isomorphism
and for any $g \in G $ and $x \in L$, 
\begin{eqnarray*}
&&\phi_t(gx)\\
&=&(id_L-h_rt^r )(gx)\\
&=&gx-h_r(gx)t^r \\
&=&g(id_L(x))-gh_r(x)t^r\\
&=&g(\phi_t(x)).
\end{eqnarray*}
 Thus, we can define another equivariant 1-parameter formal deformation  by $\phi_t^{-1}$ in the form of
\begin{eqnarray*}
f'_t(x, y)= \phi_t^{-1}f_t(\phi_t(x), \phi_t(y)), g'_t(x, y, z)= \phi_t^{-1}g_t(\phi_t(x), \phi_t(y), \phi_t(z)).
\end{eqnarray*}
Set $f'_t=\sum_{i\geq 0}f'_it^i$ and use the fact $\phi_tf'_t(x, y)=f_t(\phi_t(x), \phi_t(y))$, then we have
\begin{eqnarray*}
(id_L-h_rt^r)\sum_{i\geq 0}f'_i(x, y)t^i=(f_0+\sum_{i\geq 0}f_it^i)(x-h_r(x)t^r, y-h_r(y)t^{r}),
\end{eqnarray*}
that is,
\begin{eqnarray*}
&&\sum_{i\geq 0}f'_{i}(x, y)t^{i}-\sum_{i\geq 0}h_r\circ f'_{i}(x, y)t^{i+r}\\
&=&f_0(x, y)
   -f_0(h_r(x),y)t^{r}-f_0(x,h_r(y))t^r+f_0(h_r(x),h_r(y))t^{2r}\\
   &&+\sum_{i\geq r}f_{i}(x,y)t^{i}-\sum_{i\geq r}\{f_i(h_r(x),y)-f_i(x, h_r(y))\}t^{i+r}+\sum_{i\geq r}f_i(h_r(x),h_r(y))t^{i+2r}.
\end{eqnarray*}
By the above equation, it follows that
\begin{eqnarray*}
&&f'_0(x, y)=f_0(x, y)=[x, y],\\
&&f'_1(x, y)=f'_1(x, y)=\cdots=f'_{r-1}(x, y)=0,\\
&&f'_r(x, y)-h_r([x, y])=f_r(x, y)-[h_r(x),y]-[x,h_r(y)].
\end{eqnarray*}
Therefore, we deduce
\begin{eqnarray*}
f'_r(x, y)=f_r(x, y)-\delta^1_{I}h_r(x, y)=0.
\end{eqnarray*}
It follows that $f'_t=[\c, \c]+\sum_{i\geq r+1}f'_it^i$. Similarly, we have $g'_t=\{\c, \c, \c\}+\sum_{i\geq r+1}g'_it^i$
 By induction, we have $(f_t, g_t)\sim (f_0, g_0)$, that is, $L$ is equivariantly rigid Lie-Yamaguti algebra.
\end{proof}

\begin{center}
 {\bf ACKNOWLEDGEMENT}
 \end{center}

The paper is supported by the NSF of China (No. 12161013) and the general project of Guizhou University of Finance and Economics (No. 2021KYYB16).

\renewcommand{\refname}{REFERENCES}

\end{document}